%
%
%
%
\documentclass[11pt,twoside,english]{amsart}
%
%
\normalsize
%
%
\advance\oddsidemargin by -1.0cm
\advance\evensidemargin by -1.0cm
\textwidth=147mm
\textheight=220mm
\advance\topmargin by -1.0cm 
%
\usepackage{amssymb}
\usepackage{babel}
\usepackage{amsmath}
\usepackage{amscd}   
\usepackage{epsfig}  
\usepackage{rotating}
\usepackage{psfrag}
\let\mathg\mathfrak
\theoremstyle{plain}
\newtheorem{cor}{Corollary}[section]
\newtheorem{lem}{Lemma}[section]
\newtheorem{thm}{Theorem}[section]
\newtheorem{prop}{Proposition}[section]

\theoremstyle{definition}

%

%
%
%
\newcommand{\bdm}{\begin{displaymath}}
\newcommand{\edm}{\end{displaymath}}
\newcommand{\be}{\begin{equation}}
\newcommand{\ee}{\end{equation}}
\newcommand{\ba}[1]{\begin{array}{#1}}
\newcommand{\ea}{\end{array}}
\newcommand{\bea}[1][]{\begin{eqnarray#1}}
\newcommand{\eea}[1][]{\end{eqnarray#1}}
\newcommand{\btab}{\begin{tabular}}
\newcommand{\etab}{\end{tabular}}


\newcommand{\ra}{\rightarrow}

\newcommand{\cyclic}[1]{\stackrel{{\scriptsize #1}}{\mathfrak{S}}}
\newcommand{\C}{\ensuremath{\mathbb{C}}}
\newcommand{\R}{\ensuremath{\mathbb{R}}}

\newcommand{\M}{\ensuremath{\mathcal{M}}}




\newcommand{\End}{\ensuremath{\mathrm{End}}}

\newcommand{\kr}{\ensuremath{\mathcal{R}}}
\newcommand{\Ric}{\ensuremath{\mathrm{Ric}}}
\newcommand{\Scal}{\ensuremath{\mathrm{Scal}}}


\newcommand{\so}{\ensuremath{\mathg{so}}}
\newcommand{\SO}{\ensuremath{\mathrm{SO}}}
\newcommand{\Spin}{\ensuremath{\mathrm{Spin}}}

\newcommand{\g}{\ensuremath{\mathfrak{g}}}

\newcommand{\X}{\ensuremath{\mathfrak{X}}}
%
%
%
\begin{document}
\def\haken{\mathbin{\hbox to 6pt{%
                 \vrule height0.4pt width5pt depth0pt
                 \kern-.4pt
                 \vrule height6pt width0.4pt depth0pt\hss}}}
    \let \hook\intprod
\setcounter{equation}{0}
%
%
\thispagestyle{empty}
%
\date{\today}
\title[Flat metric connections with antisymmetric torsion]{A note on flat 
metric connections with antisymmetric torsion }
%
%
%
\author{Ilka Agricola}\author{Thomas Friedrich}
\address{\hspace{-5mm} 
Ilka Agricola\newline
Fachbereich Mathematik und Informatik \newline
Philipps-Universit\"at Marburg\newline
Hans-Meerwein-Strasse \newline
D-35032 Marburg, Germany\newline
{\normalfont\ttfamily agricola@mathematik.uni-marburg.de}}
\address{\hspace{-5mm} 
Thomas Friedrich\newline
Institut f\"ur Mathematik \newline
Humboldt-Universit\"at zu Berlin\newline
Sitz: WBC Adlershof\newline
D-10099 Berlin, Germany\newline
{\normalfont\ttfamily friedric@mathematik.hu-berlin.de}}
%
%
\thanks{Supported by the Junior Research Group "Special Geometries in Mathematical Physics"
of the Volkswagen\textbf{Foundation}.}
\subjclass[2000]{Primary 53 C 25; Secondary 81 T 30}
\keywords{flat connections, 
skew-symmetric torsion}  
\begin{abstract}
In this short note we study flat metric
connections with antisymmetric torsion $T \neq 0$. 
The result has been originally
discovered by Cartan/Schouten in 1926 and we provide a new proof not
depending on the classification of symmetric spaces. 
Any space of that type splits and the
irreducible factors are  compact simple Lie group
or a special connection on $S^7$. The latter case is interesting from the
viewpoint of $G_2$-structures and we discuss its type in the sense of the
Fernandez-Gray classification. Moreover, we investigate flat metric
connections of vectorial type. 
\end{abstract}
\maketitle
\pagestyle{headings}
%
%
%
\section{Introduction}\noindent
\noindent
Consider a complete Riemannian manifold $(M^n,g, \nabla)$ endowed with a 
metric connection $\nabla$. The torsion $T$ of  $\nabla$, viewed as a 
$(3,0)$ tensor, is defined by
\bdm
T(X,Y,Z)\ :=\ g(T(X,Y),Z)\ =\ g(\nabla_XY-\nabla_YX-[X,Y] ,Z).
\edm
Metric connections for which $T$ is antisymmetric in all arguments,
i.\,e.~$T\in\Lambda^3(M^n)$ are of particular interest, see
\cite{Agricola06}.
They correspond precisely to those metric connections that have the same 
geodesics as the Levi-Civita connection. In this note we will investigate 
\emph{flat} connections of that type.

The observation that any simple Lie group carries in fact two flat 
connections, 
usually called the $(+)$- and the $(-)$-connection, with 
torsion $T(X,Y)=\pm [X,Y]$ is due to \'E.~Cartan and J.\,A.~Schouten \cite{Cartan&Sch26a}
and is explained in detail in \cite[p.~198-199]{Kobayashi&N2}. If one then chooses a
biinvariant metric, these connections are metric and the torsion becomes
a $3$-form, as desired. Hence, the question is whether there are any further
examples of flat metric connections with antisymmetric torsion beside products 
of Lie groups.

The answer can be found in Cartan's work. 
In fact, \'E.~Cartan and J.\,A.~Schouten
published a second joint paper very shortly after the one mentioned above, 
\cite{Cartan&Sch26b}.
There is only one additional such geometry,  realized on $S^7$. Their proof that no more cases can occur is
by diligent 
inspection of some defining tensor fields.

Motivated from the problem when the Laplacian of a Riemannian manifold can
at least locally be written as  a sum $\Delta = -\sum X_i\circ X_i$,
d'Atri and Nickerson investigated in 1968 manifolds which admit
an orthonormal frame consisting of Killing vector fields. This question is 
almost equivalent to the previous.
In two beautiful papers, Joe Wolf picked 
up the question again in the early 70ies and provided a complete 
classification of 
all  complete (reductive) pseudo-Riemannian manifolds admitting absolute 
parallelism, thus reproving the Cartan-Schouten result by other means 
\cite{Wolf72a}, \cite{Wolf72b}. The key observation was that the
Riemannian curvature of such a space must, for three $\nabla$-parallel
vector fields, be given by $R(X,Y)Z=- [[X,Y],Z]/4$, and thus defines a 
Lie triple system. The proof is then reduced to an (intricate) algebraic
problem about Lie triple systems, and $S^7$ (together with two
pseudo-Riemannian siblings) appears because of the outer automorphism
inherited from triality.\\
 
\noindent
The main topic of this paper is to understand this very interesting result
in terms of special geometries with torsion. We will give a new and elementary
proof of the
result not using the classification of symmetric spaces. Moreover, we 
describe explicitely
the family of flat metric connections with antisymmetric torsion
on $S^7$ and  make the link to
$G_2$ geometry apparent. 
%
\section{The case of skew symmetric torsion}\noindent
%
Let $(M^n,g,\nabla)$ be a connected Riemannian
manifold endowed with a flat metric connection $\nabla$. The 
parallel transport of any orthonormal 
frame in a point will define a local orthonormal frame  $e_1,\ldots.e_n$
in all other points. In the sequel, no distinction will be
made between vector fields and $1$-forms. The standard formula for the
exterior derivative of a $1$-form yields
\begin{eqnarray*}
d e_i(e_j,e_k)& =& e_j\langle e_i,e_k\rangle - e_k\langle e_i,e_j\rangle
-\langle e_i,[e_j,e_k]\rangle\ =\ -\langle e_i,[e_j,e_k]\rangle \notag \\
&=& -\langle e_i, \nabla_{e_j} e_k -\nabla_{e_k}e_j - T(e_j,e_k)\rangle
\ =\  \langle e_i, T(e_j,e_k)\rangle. \label{dei}
\end{eqnarray*}
Hence, the torsion can be computed from  the frame
$e_i$ and their differentials.
As was shown by Cartan 1925, the torsion $T$ of $\nabla$ can basically be 
of $3$ possible types---a $3$-form, a vector, and a more difficult type
that has no geometric interpretation \cite{Tricerri&V1}, \cite{Agricola06}.
We shall first study the case that the torsion is a $3$-form. We state the explicit formula for the torsion and
draw some first conclusions from the identities relating the curvatures
of $\nabla$ and $\nabla^g$, the Levi-Civita connection.
Let the flat connection $\nabla$ be given by
\bdm
\nabla_X Y\ =\ \nabla^g_X Y +\frac{1}{2}T(X,Y,-) 
\edm
for a $3$-form $T$. 
The general relation between $\Ric^g$ and $\Ric^\nabla$
\cite[Thm A.1]{Agricola06} yields for any orthonormal frame $e_1,\ldots,e_n$
that the Riemannian Ricci tensor can be computed directly from $T$, 
\bdm
\Ric^g(X,Y)\ =\ \frac{1}{4}\sum_{i=1}^n \langle T(X,e_i),T(Y,e_i) \rangle, \quad
\Scal^g\ = \ \frac{3}{2}\|T\|^2.
\edm
In particular, $\Ric^g$ is non-negative, $\Ric^g(X,X)\geq 0$ for all $X$,
and $\Ric^g(X,X)=0$ if and only if $X\haken T=0$.
The torsion form $T$ is coclosed, $\delta T=0$,  
because it coincides with the skew-symmetric part of  $\Ric^\nabla = 0$. We
define the $4$-form $\sigma_T$ by the formula
\bdm
\sigma_T \ := \, \frac{1}{2} \sum_{i=1}^n ( e_i \haken T) \wedge (e_i \haken
T) \ ,
\edm 
or equivalently by the formula
\bdm
\sigma_T(X,Y,Z,V)\ :=\ \langle T(X,Y),T(Z,V)\rangle
+\langle T(Y,Z),T(X,V)\rangle
+\langle T(Z,X),T(Y,V)\rangle \ .
\edm
Denote by $\nabla^{1/3}$  the metric connection with torsion $T/3$. Then we
can formulate some properties of the Riemannian manifold and the torsion form.

\begin{prop}
Let $\nabla$ be a flat metric connection with torsion
$T\in\Lambda^3(M^n)$. Then
\bdm
3 \, dT \ = \ 2 \, \sigma_T \, , \quad \nabla^{1/3} T \ = \ 0 \, , 
\quad \nabla^{1/3} \sigma_T \ = \ 0 \ .
\edm
The covariant derivative $\nabla T$ is a $4$-form and given by
\bdm
(\nabla_V T)(X,Y,Z)\ =\ \frac{1}{3}\sigma_T(X,Y,Z,V) \, \quad
\mbox{or} \quad \nabla_V T \ = \ - \, \frac{1}{3} \, (V \haken
\sigma_T) \ ,
\edm
\bdm
(\nabla^g_V T)(X,Y,Z)\ =\ - \, \frac{1}{6}\sigma_T(X,Y,Z,V) \, \quad
\mbox{or} \quad \nabla^g_V T \ = \  \frac{1}{6} \, (V \haken
\sigma_T) \ .
\edm
In particular, the length $||T||$
and the scalar curvature are constant.
The full Riemann curvature tensor is given by
\bdm
\kr^g(X,Y,Z,V)\ =\ - \frac{1}{6}\langle T(X,Y),T(Z,V)\rangle
+\frac{1}{12}\langle T(Y,Z),T(X,V)\rangle
+\frac{1}{12}\langle T(Z,X),T(Y,V)\rangle,
\edm
and  is $\nabla^{1/3}$-parallel, $\nabla^{1/3} \kr^g = 0$. Finally,
the sectional curvature is non-negative,
\bdm
K(X,Y)\ =\ \frac{\|T(X,Y)\|^2}{4[\|X\|^2\|Y\|^2-\langle X,Y\rangle^2]}\ \geq\ 0.
\edm
\end{prop}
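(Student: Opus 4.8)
The plan is to derive the entire proposition from the single hypothesis $\kr^\nabla=0$ together with the universal comparison between $\kr^\nabla$ and $\kr^g$ for a metric connection with skew torsion, as recorded in \cite[Thm A.1]{Agricola06}. Writing $\nabla=\nabla^g+\frac12 T$, this relation reads
\[
\kr^\nabla(X,Y,Z,V)=\kr^g(X,Y,Z,V)+\frac12\big[(\nabla^g_X T)(Y,Z,V)-(\nabla^g_Y T)(X,Z,V)\big]+\frac14\,Q(X,Y,Z,V),
\]
where $Q(X,Y,Z,V):=\langle T(Y,V),T(X,Z)\rangle-\langle T(X,V),T(Y,Z)\rangle$. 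Setting $\kr^\nabla=0$ expresses $\kr^g$ purely through $\nabla^g T$ and the algebraic term $Q$. Two features of $Q$ drive the argument: it is symmetric under the pair exchange $(X,Y)\leftrightarrow(Z,V)$, and its cyclic sum satisfies $\mathfrak{S}_{X,Y,Z}\,Q=-2\,\sigma_T$ (with $\sigma_T\in\Lambda^4$), facts one checks by relabelling and using the total antisymmetry of $T$.

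First I would pin down $\nabla^g T$ and $dT$ by imposing on the right-hand side the two symmetries that $\kr^g$ automatically enjoys. Writing $\alpha_W:=\nabla^g_W T$ (a $3$-form in its three arguments) and using the torsion-free identity $dT(X,Y,Z,V)=\mathfrak{S}_{X,Y,Z}\alpha_X(Y,Z,V)-\alpha_V(X,Y,Z)$, the first Bianchi identity $\mathfrak{S}_{X,Y,Z}\kr^g=0$ gives
\[
\alpha_V(X,Y,Z)=\frac12\,\sigma_T(X,Y,Z,V)-dT(X,Y,Z,V),
\]
while the pair symmetry $\kr^g(X,Y,Z,V)=\kr^g(Z,V,X,Y)$ — whereupon the $Q$-terms cancel and cyclic invariance of the $3$-form $\alpha_W$ is used — gives $\alpha_V(X,Y,Z)-\alpha_Z(X,Y,V)=-\frac12\,dT(X,Y,Z,V)$. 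Substituting the first relation into the second is a two-line elimination yielding $3\,dT=2\,\sigma_T$ and then $\nabla^g_V T=\frac16\,(V\haken\sigma_T)$, i.e.\ $(\nabla^g_V T)(X,Y,Z)=-\frac16\sigma_T(X,Y,Z,V)$. The short algebraic identity $(\nabla_V T)(X,Y,Z)-(\nabla^g_V T)(X,Y,Z)=\frac12\sigma_T(X,Y,Z,V)$ — obtained by expanding $\nabla=\nabla^g+\frac12 T$ and contracting the three resulting cubic terms into $-\sigma_T$ — then gives $(\nabla_V T)(X,Y,Z)=\frac13\sigma_T(X,Y,Z,V)$; running the same contraction with torsion $T/3$ makes the $\nabla^g T$ term and the cubic term cancel, so $\nabla^{1/3}T=0$.

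The remaining assertions are now soft. Back-substituting $\alpha_V=-\frac16\sigma_T(\,\cdot\,,V)$ into $\kr^g=-\frac12\big(\alpha_X(Y,Z,V)-\alpha_Y(X,Z,V)\big)-\frac14 Q$ and simplifying with the definition of $\sigma_T$ produces the stated closed formula for $\kr^g$ in terms of $T\otimes T$. Since both $\sigma_T$ and this $\kr^g$ are metric contractions of $T\otimes T$, and since $\nabla^{1/3}$ is metric with $\nabla^{1/3}T=0$, the Leibniz rule immediately yields $\nabla^{1/3}\sigma_T=0$ and $\nabla^{1/3}\kr^g=0$; likewise $\|T\|^2$, hence $\Scal^g=\frac32\|T\|^2$, is $\nabla^{1/3}$-parallel and therefore constant. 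Finally, putting $Z=Y$ and $V=X$ in the curvature formula annihilates the $\langle T(Y,Y),T(X,X)\rangle$ term and collapses the rest to $\kr^g(X,Y,Y,X)=\frac14\|T(X,Y)\|^2\ge 0$, which is the sectional-curvature statement.

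The main obstacle is the sign-and-symmetry bookkeeping in the middle step: correctly evaluating $\mathfrak{S}_{X,Y,Z}Q=-2\sigma_T$ and the cyclic derivative sum, and correctly exploiting the cyclic invariance of the $3$-form $\alpha_W$ when transposing pairs under the Bianchi symmetry. The rational coefficients $3,\,2,\,\tfrac16,\,\tfrac13$ are entirely fixed by these two identities, so one misplaced sign there would corrupt all of them at once; everything downstream is then a mechanical contraction or a Leibniz argument.
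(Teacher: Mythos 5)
Your proof is correct, and it takes a genuinely more self-contained route than the paper's. The paper obtains the two key identities by citation: the first Bianchi identity for a flat connection with skew torsion, $dT-\sigma_T+(\nabla_V T)=0$ \cite[Thm 2.6]{Agricola06}, and the relation $3\,dT=2\,\sigma_T$ \cite[Cor.~3.2]{Friedrich&I1}; combining them gives $\nabla_VT=\frac13\sigma_T(\cdot,\cdot,\cdot,V)$ at once, the curvature formula then drops out of the $\nabla$-version of the comparison identity \cite[Thm A.1]{Agricola06}, and $\nabla^{1/3}T=0$ and $\nabla^g_VT=\frac16(V\haken\sigma_T)$ are deduced \emph{from} $\nabla T$ by the contraction $(V\haken T)[T]=\sigma_T(\cdot,\cdot,\cdot,V)$. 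You instead start from the $\nabla^g$-version of the comparison identity and re-derive both key identities from scratch by imposing the two classical symmetries of $\kr^g$ (the cyclic Bianchi sum and the pair symmetry), using the pair symmetry of your quadratic term $Q$ and the cyclic identity $\mathfrak{S}_{X,Y,Z}\,Q=-2\,\sigma_T$. I verified these sign computations, the elimination giving $3\,dT=2\,\sigma_T$ and $\nabla^g_VT=\frac{1}{6}(V\haken\sigma_T)$, the cubic-contraction step (the three terms from $\nabla=\nabla^g+\frac12 T$ acting on $T$ do sum to $-\sigma_T(X,Y,Z,V)$, giving $\nabla_VT=\frac13\sigma_T(\cdot,\cdot,\cdot,V)$ and $\nabla^{1/3}T=0$), and the back-substitution producing the coefficients $-\frac16,\frac1{12},\frac1{12}$; all are right, as are the Leibniz arguments for $\nabla^{1/3}\sigma_T=0$, $\nabla^{1/3}\kr^g=0$, constancy of $\|T\|$ and $\Scal^g$, and the specialization $\kr^g(X,Y,Y,X)=\frac14\|T(X,Y)\|^2$. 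The trade-off: the paper's proof is short because the hard identity $3\,dT=2\,\sigma_T$ is imported from Friedrich--Ivanov (where it is established by a closely related symmetry computation, so the substance is parallel), whereas your argument needs only the standard comparison of $\kr^\nabla$ with $\kr^g$ and the textbook symmetries of the Riemannian curvature tensor, at the cost of the heavier sign-and-symmetry bookkeeping you yourself flag; a further organizational difference is that you determine $\nabla^g T$ first and derive $\nabla T$ from it, while the paper proceeds in the opposite order.
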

\begin{proof}
The first Bianchi identity \cite[Thm 2.6]{Agricola06},
\cite{Friedrich&I1} states for flat $\nabla$
\be\label{Bianchi-I}
dT(X,Y,Z,V) - \sigma_T(X,Y,Z,V)+(\nabla_V T)(X,Y,Z)\ =\ 0  .
\ee
By the general formula
\cite[Cor.~3.2]{Friedrich&I1} we have $3dT = 2 \, \sigma_T $
for any flat connection with skew-symmetric torsion.
Together with equation ($\ref{Bianchi-I}$), 
this shows the first and second formula. 
The expression for the curvature follows from this and 
the general identity \cite[Thm A.1]{Agricola06}, \cite{Friedrich&I1}
\bea[*]
\kr^g(X,Y,Z,V)& =& \kr^\nabla(X,Y,Z,V) -\frac{1}{2}(\nabla_X T)(Y,Z,V)\\
&& +\frac{1}{2}(\nabla_Y T)(X,Z,V)-\frac{1}{4}\langle T(X,Y),T(Z,V)\rangle
-\frac{1}{4}\sigma_T(X,Y,Z,V).
\eea[*]
Since $\nabla  -  \nabla^{1/3} = \frac{1}{3}  T$, we obtain
\bdm
(\nabla_VT)(X,Y,Z)  -  (\nabla^{1/3}_V T)(X,Y,Z) \ = \ 
\frac{1}{3} \, T(V , - ,  - ) [T] (X,Y,Z)  , 
\edm
where $T(V , - ,  - ) [T]$ denotes the action of the $2$-form
$T(V , - ,  - )$ on the $3$-form $T$. Computing this action, we
obtain
\bdm
 T(V , -  ,  - ) [T] (X,Y,Z) \ = \ \sigma_T(X,Y,Z,V)  .
\edm
$\nabla^{1/3} T = 0$ follows now directly from the formula for $\nabla T$. 
In a similar way we compute $\nabla^gT$,
\bdm
\nabla^g_V T \ = \ \nabla_V T -  \frac{1}{2} \, (V \haken T)[T] \ = \ 
- \, \frac{1}{3} \, (V \haken \sigma_T)  +  \frac{1}{2} \,
(V \haken \sigma_T) \, = \, \frac{1}{6} (V \haken \sigma_T)  .\qedhere
\edm
\end{proof}
\noindent
Observe that the curvature identity of the last proposition is
nothing than formula (6) in \cite{Cartan&Sch26b} and the formula for
$\nabla^gT$ is formula (7) in the Cartan/Schouten paper. 
\begin{cor}\label{parallel-T-pol}
Consider a tensor field $\mathcal{T}$ being a polynomial of the torsion form
$T$. Then we have
\bdm
\nabla \mathcal{T} \ = \ - \, 2 \, \nabla^g \mathcal{T} \ .
\edm 
In particular, $\mathcal{T}$ is $\nabla$-parallel if and only if it is $\nabla^g$-parallel.
\end{cor}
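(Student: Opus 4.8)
The plan is to exploit the connection $\nabla^{1/3}$, for which the previous proposition already provides $\nabla^{1/3}T = 0$. Since $\nabla^{1/3}$ is a metric connection we also have $\nabla^{1/3}g = 0$, so $\nabla^{1/3}$ annihilates both building blocks --- the torsion form $T$ and the metric $g$ used to contract indices --- out of which any polynomial $\mathcal{T}$ in $T$ is assembled. As $\nabla^{1/3}$ obeys the Leibniz rule and commutes with metric contractions, it follows at once that $\nabla^{1/3}\mathcal{T} = 0$ for every such $\mathcal{T}$; this is the only place where the hypothesis that $\mathcal{T}$ be polynomial in $T$ enters.

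It then remains to translate the vanishing of $\nabla^{1/3}\mathcal{T}$ into a relation between $\nabla$ and $\nabla^g$. Here I would use that the three connections are collinear: writing $\nabla^t := \nabla^g + \tfrac{t}{2}\,T(\,\cdot\,,\,\cdot\,,-)$ for the metric connection with torsion $t\,T$, one recovers $\nabla^g$ at $t=0$, the connection $\nabla^{1/3}$ at $t=\tfrac13$, and $\nabla$ at $t=1$. Because the difference of two connections is a tensor, and here the difference $\nabla^t-\nabla^g$ depends linearly on the parameter $t$, its action on any fixed tensor $\mathcal{T}$ is a derivation scaling linearly in $t$; hence $\nabla^t_V\mathcal{T} = (1-t)\,\nabla^g_V\mathcal{T} + t\,\nabla_V\mathcal{T}$ is affine in $t$. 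Evaluating at $t=\tfrac13$ and using $\nabla^{1/3}\mathcal{T}=0$ gives $0 = \tfrac23\,\nabla^g_V\mathcal{T} + \tfrac13\,\nabla_V\mathcal{T}$, that is $\nabla\mathcal{T} = -2\,\nabla^g\mathcal{T}$.

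The step I expect to require the most care is the affine dependence on $t$: one must check that the correction term $\nabla^t-\nabla^g$, extended from vector fields to arbitrary tensors via the Leibniz rule, really is linear in $t$ on a fixed argument --- this is clear because the underlying difference tensor is $\tfrac{t}{2}T$, but it is the conceptual heart of the argument. A purely elementary alternative avoids $\nabla^t$ altogether: comparing the two derivative formulas of the proposition gives the base case $\nabla_V T = -2\,\nabla^g_V T$, and a direct Leibniz expansion of a monomial $T^{\otimes k}$ --- in which exactly one factor is differentiated per summand, so that the constant $-2$ appears linearly rather than as $(-2)^k$ --- propagates this to every polynomial. In either case the final assertion is immediate: $\nabla\mathcal{T}=0$ holds precisely when $-2\,\nabla^g\mathcal{T}=0$, i.e.\ when $\nabla^g\mathcal{T}=0$.
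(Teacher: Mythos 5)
Your proposal is correct, and its primary route differs from the paper's. The paper gives no explicit proof of this corollary: it is meant as an immediate consequence of the two displayed derivative formulas in the preceding proposition, $\nabla_V T = -\frac{1}{3}(V\haken\sigma_T)$ and $\nabla^g_V T = \frac{1}{6}(V\haken\sigma_T)$, which give $\nabla_V T = -2\,\nabla^g_V T$ on $T$ itself; this then propagates to polynomials exactly by the Leibniz/contraction argument you sketch as your ``elementary alternative'' (and your remark that the factor $-2$ enters linearly --- one differentiated factor per Leibniz summand --- rather than as $(-2)^k$ is precisely the one trap in that route, correctly avoided). Your main argument instead places $\nabla^g$, $\nabla^{1/3}$ and $\nabla$ on the affine pencil $\nabla^t = \nabla^g + \frac{t}{2}T(\cdot,\cdot,-)$, observes that the induced action on any fixed tensor is affine in $t$ (since the difference endomorphism $\frac{t}{2}\,V\haken T$ acts linearly and is extended as a derivation), and evaluates at $t=\frac{1}{3}$ using $\nabla^{1/3}\mathcal{T}=0$, which follows from $\nabla^{1/3}T=0$, $\nabla^{1/3}g=0$ and Leibniz. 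This is sound, and it buys something: it isolates the polynomial hypothesis to the single statement $\nabla^{1/3}\mathcal{T}=0$, so the identity $\nabla\mathcal{T}=-2\,\nabla^g\mathcal{T}$ actually holds for \emph{every} $\nabla^{1/3}$-parallel tensor, and it explains conceptually where the constant $-2$ comes from ($\frac{1}{3}$ divides the segment from $\nabla^g$ to $\nabla$ in ratio $1:2$); it also meshes well with the paper's systematic later use of $\nabla^{1/3}$-parallelism for $T$, $\sigma_T$ and $\kr^g$. The paper's implicit route is more elementary and needs no pencil of connections; yours is slightly more structural. Both are complete.
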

\noindent
We derive the following splitting principle, which can again be found in
\cite{Cartan&Sch26b}.
\begin{prop}
If $M^n = M_1^{n_1} \times M_2^{n_2}$ is the Riemannian product and $T$ is the
torsion form of a flat metric connection, then T splits into $T = T_1 + T_2$,
where $T_i \in \Lambda^3 (M_i^{n_i})$ are $3$-forms on $M_i^{n_i}$. Moreover,
the connection splits,
\bdm
(M^n , g , \nabla) \ = \ (M^{n_1} , g_1 , \nabla^1) \, \times \, 
(M^{n_2} , g_2 , \nabla^2)
\edm 
\end{prop}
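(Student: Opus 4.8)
The plan is to decompose the torsion according to the product and to show that its mixed components are forced to vanish by the curvature formulas of the previous Proposition. Write the induced splitting of the tangent bundle as $TM = TM_1 \oplus TM_2$ and split a $3$-form on $M^n$ into its bidegree components
\[
\Lambda^3(M^n) \ = \ \Lambda^3(M_1) \op \big(\Lambda^2(M_1)\ox\Lambda^1(M_2)\big) \op \big(\Lambda^1(M_1)\ox\Lambda^2(M_2)\big) \op \Lambda^3(M_2),
\]
so that $T = T_1 + T_{21} + T_{12} + T_2$, where $T_1 \in \Lambda^3(M_1)$ and $T_2 \in \Lambda^3(M_2)$ are the pure parts and $T_{21}, T_{12}$ are the mixed parts. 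The goal is to show $T_{21} = T_{12} = 0$, and then that $T_1, T_2$ are pulled back from forms on the respective factors.

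First I would exploit the sectional curvature formula of the last Proposition, which shows that $K(X,Y)$ vanishes if and only if $T(X,Y)=0$ (for independent $X,Y$). For a Riemannian product the sectional curvature of any plane spanned by a vector $X$ tangent to $M_1$ and a vector $Y$ tangent to $M_2$ is zero; hence $T(X,Y) = 0$ for all such $X, Y$. Evaluating $T(X,Y,Z)$ on a third argument $Z$ chosen first tangent to $M_1$ and then tangent to $M_2$ shows that both mixed components vanish, $T_{21} = T_{12} = 0$, so that $T = T_1 + T_2$ with $T_i \in \Lambda^3(M_i)$.

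It remains to show that $T_1$ and $T_2$ are constant along the complementary factor; here I would use the identity $\nabla^g_V T = \tfrac16\,(V\haken\sigma_T)$. Because the only surviving components of $T$ are $T_1$ and $T_2$, evaluating $\sigma_T = \tfrac12\sum_i (e_i\haken T)\wedge(e_i\haken T)$ in a frame adapted to the splitting gives $\sigma_T = \sigma_{T_1} + \sigma_{T_2}$ with $\sigma_{T_i} \in \Lambda^4(M_i)$. For $V$ tangent to $M_2$ the right-hand side $\tfrac16(V\haken\sigma_{T_2})$ has all its legs in $M_2$, whereas the product structure of $\nabla^g$ splits the left-hand side as $\nabla^g_V T_1 + \nabla^g_V T_2$ with $\nabla^g_V T_1$ of pure bidegree in $M_1$. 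Matching bidegrees yields $\nabla^g_V T_1 = 0$ for every $V$ tangent to $M_2$, i.e. the component functions of $T_1$ do not depend on the $M_2$-coordinates; symmetrically $T_2$ is independent of the $M_1$-coordinates. Thus $T_1$ and $T_2$ descend to genuine $3$-forms on $M_1$ and $M_2$.

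Finally, set $\nabla^i := \nabla^{g_i} + \tfrac12 T_i$ on $M_i$. Since $\nabla^g$ preserves the splitting and $T(X,Y)=0$ on mixed arguments, $\nabla$ preserves the factor distributions $TM_1$ and $TM_2$ and restricts on each factor to $\nabla^i$; hence $\nabla = \nabla^1 \oplus \nabla^2$ as a connection on the product. The curvature splits accordingly, $\kr^\nabla = \kr^{\nabla^1}\oplus\kr^{\nabla^2}$, so flatness of $\nabla$ gives flatness of each $\nabla^i$. I expect the main obstacle to be the third step: showing that the pure components are not merely of the right bidegree but are genuinely independent of the other factor's coordinates, so that the connection splits as a product rather than just pointwise — and the identity for $\nabla^g T$ is exactly what supplies this.
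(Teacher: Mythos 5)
Your proposal is correct and its key step is exactly the paper's: in a Riemannian product the sectional curvature of mixed planes vanishes, and the formula $K(X,Y)=\|T(X,Y)\|^2/4[\|X\|^2\|Y\|^2-\langle X,Y\rangle^2]$ then forces $X\haken(Y\haken T)=0$ for $X$ tangent to $M_1$ and $Y$ tangent to $M_2$. The paper's proof stops there, treating the rest as immediate, whereas you carefully supply the remaining details (that $T_1,T_2$ are pulled back from the factors, via $\nabla^g_VT=\tfrac16(V\haken\sigma_T)$ and bidegree matching, and that $\nabla$ splits accordingly) --- a welcome completion, not a different route.
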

\begin{proof}
Consider two vectors $X \in T(M^{n_1}) , \, Y \in T(M^{n_2})$. Then the sectional
curvature of the $\{X,Y\}$-plane vanishes,
$\kr^g(X,Y,Y,X) = 0$. Consequently, we conclude that
$X \haken (Y \haken T) = 0$ holds.
\end{proof}
\noindent
In the simply connected and complete case we can decompose our flat metric
structure into a product of irreducible ones (de Rham decomposition
Theorem). Consequently we assume from now on that $M^n$ is a complete,
simply connected and
irreducible Riemannian manifold and that $T \neq 0$ is non-trivial. The
$\nabla$-parallel vector fields $e_1 , \ldots , e_n$ are Killing 
and we immediately obtain the formulas
\bdm
\nabla^g_{e_k} e_l \ = \ -  \nabla^g_{e_l} e_k  , \quad 
[e_k , e_l] \ = \ 2 \, \nabla^g_{e_k} e_l \ = \ -  T(e_k, e_l) 
\edm
and
\bdm
e_k \big( \langle [e_i,e_j], e_l \rangle \big) \ = \ -  
(\nabla_{e_k}T)(e_i,e_j,e_l) \ =
\  - \,\frac{1}{3} \, \sigma_T (e_i , e_j , e_l , e_k) .
\edm
In particular, $e_k \big( \langle [e_i,e_j], e_l \rangle \big)$ 
is totally skew-symmetric
and the function $ \langle [e_i,e_j], e_l \rangle$ is constant if and only 
if the torsion form is $\nabla$-parallel, $\nabla T = 0$ (see 
\cite{DAtri&N68}, Lemma 3.3 and Proposition 3.7).
\begin{prop}[{see \cite{DAtri&N68}, Lemma 3.4}]
The Riemannian curvature tensor in the frame $e_1, \ldots , e_n$ is given by
the formula
\bdm
 \kr^g ( e_i , e_j) e_k\ =  - \, \frac{1}{4} \big[ [ e_i, e_j ] , e_k \big] .
\edm
In particular, $ \kr^g ( e_i , e_j ) e_k$ is a Killing vector field.
\end{prop}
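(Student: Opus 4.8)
The plan is to verify the identity in components by pairing both sides with a frame vector $e_l$ and comparing with the explicit curvature formula $\kr^g(e_i,e_j,e_k,e_l)$ already established in the preceding proposition. Writing $T_{abc} := \langle T(e_a,e_b),e_c\rangle$ for the (totally antisymmetric) structure functions, the two ingredients I would use are the relation $[e_a,e_b] = - T(e_a,e_b) = -\sum_m T_{abm}\,e_m$ and the derivative formula $e_k(T_{ijl}) = \tfrac{1}{3}\,\sigma_T(e_i,e_j,e_l,e_k)$, which follows from the formula $e_k\big(\langle[e_i,e_j],e_l\rangle\big) = -\tfrac{1}{3}\,\sigma_T(e_i,e_j,e_l,e_k)$ derived just above together with $\langle[e_i,e_j],e_l\rangle = -T_{ijl}$. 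Both are consequences of the $\nabla$-parallelism of the frame and of $\nabla_V T = -\tfrac{1}{3}(V\haken\sigma_T)$.

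First I would expand the triple bracket $[[e_i,e_j],e_k]$ directly. Since the coefficients $T_{ijm}$ are functions rather than constants, the Leibniz rule $[fX,Y] = f[X,Y] - (Yf)X$ produces two contributions,
\bdm
[[e_i,e_j],e_k] \ = \ -\sum_m T_{ijm}\,[e_m,e_k] \ + \ \sum_m (e_k T_{ijm})\,e_m \ .
\edm
The first sum is handled again by $[e_m,e_k] = -\sum_p T_{mkp}\,e_p$ and, after pairing with $e_l$, yields the ``structure--constant'' term $\sum_m T_{ijm}T_{mkl} = \sum_m T_{ijm}T_{klm}$; the second sum is exactly where the derivative formula enters, replacing $e_k T_{ijl}$ by $\tfrac{1}{3}\sigma_T(e_i,e_j,e_l,e_k)$ and introducing the remaining quadratic expressions in $T$.

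Collecting terms and rewriting $\sigma_T$ through the cyclic and antisymmetry relations of $T_{abc}$, the pairing $\langle[[e_i,e_j],e_k],e_l\rangle$ becomes a fixed linear combination of $\sum_m T_{ijm}T_{klm}$, $\sum_m T_{jkm}T_{ilm}$ and $\sum_m T_{kim}T_{jlm}$. Multiplying by $-\tfrac{1}{4}$, I would then check that this coincides, term by term after relabeling via the antisymmetry of $T$, with $\kr^g(e_i,e_j,e_k,e_l)$ from the previous proposition, i.e.\ that the coefficients reproduce $-\tfrac16,\,+\tfrac1{12},\,+\tfrac1{12}$. The main obstacle is purely bookkeeping, but it is essential: the non-constancy of the structure functions genuinely contributes, since in the Lie-group case $\nabla T = 0$ the derivative term drops and one recovers the classical $\kr^g(X,Y)Z = -\tfrac14[[X,Y],Z]$ of a biinvariant metric, whereas in general the $\sigma_T$-contribution is precisely what is needed to reproduce the full curvature, so every antisymmetrization must be tracked carefully. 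Finally, the \emph{``in particular''} is immediate: $e_i,e_j,e_k$ are Killing, the Lie bracket of Killing fields is again Killing, hence $[[e_i,e_j],e_k]$---and therefore $\kr^g(e_i,e_j)e_k$---is a Killing vector field.
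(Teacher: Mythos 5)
Your proof is correct, and the bookkeeping you defer does close: writing $A=\sum_m T_{ijm}T_{klm}$, $B=\sum_m T_{jkm}T_{ilm}$, $C=\sum_m T_{kim}T_{jlm}$, your Leibniz expansion together with $e_k(T_{ijl})=\tfrac{1}{3}\sigma_T(e_i,e_j,e_l,e_k)$ gives $\langle[[e_i,e_j],e_k],e_l\rangle = A+\tfrac{1}{3}\sigma_T(e_i,e_j,e_l,e_k)=\tfrac{2}{3}A-\tfrac{1}{3}B-\tfrac{1}{3}C$, hence $-\tfrac{1}{4}\langle[[e_i,e_j],e_k],e_l\rangle=-\tfrac{1}{6}A+\tfrac{1}{12}B+\tfrac{1}{12}C=\kr^g(e_i,e_j,e_k,e_l)$, exactly matching the coefficients of the curvature formula in the first Proposition. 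That said, your route is genuinely different from the paper's, which runs in the opposite direction: there, the Killing property of the parallel frame (via $\nabla^g_XY=-\nabla^g_YX$) is used to establish the invariant identity $\langle[e_i,e_j],[e_k,e_l]\rangle=\langle[[e_i,e_j],e_k],e_l\rangle+e_k\big(\langle[e_i,e_j],e_l\rangle\big)$, this is substituted into the curvature formula written with brackets, and the conclusion follows from the Jacobi identity for \emph{vector fields}, the three derivative terms dropping out because their coefficients $-\tfrac{1}{6}+\tfrac{1}{12}+\tfrac{1}{12}$ sum to zero once total skew-symmetry is invoked --- so the paper only ever uses the skew-symmetry of $e_k\big(\langle[e_i,e_j],e_l\rangle\big)$, never its explicit value. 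Your component computation instead requires the full strength of $\nabla_VT=-\tfrac{1}{3}(V\haken\sigma_T)$, but in exchange needs neither the Jacobi identity nor any further Killing-field manipulation (both are automatically encoded in the Leibniz-rule bookkeeping), and it makes transparent the instructive limit you point out: for $\nabla T=0$ the derivative term vanishes and one recovers the classical $\kr^g(X,Y)Z=-\tfrac{1}{4}[[X,Y],Z]$ of a biinvariant metric, while in general the $\sigma_T$-contribution is exactly what deforms the structure-constant term to the right coefficients. The ``in particular'' clause is handled identically in both arguments: the $e_i$ are Killing and brackets of Killing fields are again Killing.
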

\begin{proof}
We compute
\begin{eqnarray*}
\langle [e_i,e_j],[e_k,e_l] \rangle &=& 2 \, \langle [e_i,e_j] ,
\nabla^g_{e_k} e_l \rangle \ = \ 
- \, 2 \, \langle \nabla^g_{e_k} [e_i,e_j] , e_l \rangle  +  2  e_k \big( 
\langle [e_i,e_j] , e_l \rangle \big) \\
&=& -  2  \, \big\langle \nabla^g_{[e_i,e_j]}e_k + \big[ e_k , [e_i ,e_j] \big] ,
e_l \big\rangle  +  2  \, e_k \big( \langle [e_i,e_j] , e_l \rangle \big) \\
&=& 2 \, \langle  \nabla^g_{e_l} e_k , [e_i, e_j] \rangle  +  
2 \, \langle \big[ [e_i,e_j] , e_k \big] , e_l \rangle  +   
2 \, e_k \big( \langle [e_i,e_j] , e_l \rangle \big) \\
&=& \langle [e_l, e_k],[e_i,e_j] \rangle  + 
2 \, \langle \big[ [e_i,e_j] , e_k \big] , e_l \rangle  +   
2 \, e_k \big( \langle [e_i,e_j] , e_l \rangle \big)  
\end{eqnarray*}
and we obtain the following formula
\bdm
\langle [e_i,e_j],[e_k,e_l] \rangle \ = \  \langle \big[ [e_i,e_j],e_k \big] , 
e_l \rangle +  
e_k \big( \langle [e_i,e_j] , e_l \rangle \big)  .
\edm
The required formula follows now from the Jacobi identity and the fact, that 
$e_k \big( \langle [e_i,e_j] , e_l \rangle \big)$ is totally skew-symmetric,
\begin{eqnarray*}
\kr^g(e_i,e_j, e_k,e_l) &=& - \, \frac{1}{6}  \langle [e_i,e_j],[e_k,e_l]
\rangle  +
\,  \frac{1}{12} \, \langle [e_j,e_k],[e_i,e_l] \rangle +  
\frac{1}{12} \, \langle [e_k,e_i],[e_j,e_l] \rangle \\
&=&  - \, \frac{1}{6} \, \langle \big[[e_i,e_j],e_k\big] ,e_l \rangle -  
\frac{1}{6} \,  e_k \big( \langle [e_i,e_j] , e_l \rangle \big) 
 + \,  \frac{1}{12} \, \langle \big[[e_j,e_k],e_i\big] ,e_l \rangle\\
&& + \frac{1}{12} \,  e_i \big( \langle [e_j,e_k] , e_l \rangle \big) 
 + \,  \frac{1}{12} \, \langle \big[[e_k,e_i],e_j\big] ,e_l \rangle +  
\frac{1}{12} \,  e_j \big( \langle [e_k,e_i] , e_l \rangle \big) \\
&=& - \, \frac{1}{4} \, \langle \big[[e_i,e_j],e_k \big] , e_l \rangle . \hspace{7cm} 
\qedhere
\end{eqnarray*}
\end{proof}
\begin{lem}
Let $X,Y$ be a pair of Killing vector fields such that 
$\langle X,Y \rangle$ is constant and let $Z$ be a third Killing
vector field. Then 
\bdm
X \big( \langle Y , Z \rangle \big) \ = \ - \, Y \big( \langle X , Z \rangle 
\big) .
\edm
is skew-symmetric in $X,Y$.
\end{lem}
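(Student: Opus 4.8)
The plan is to use the defining property of Killing vector fields together with the hypothesis that $\langle X, Y\rangle$ is constant. Recall that a vector field $V$ is Killing precisely when its covariant derivative $\nabla^g V$ is skew-symmetric, i.e. $\langle \nabla^g_A V, B\rangle = -\langle \nabla^g_B V, A\rangle$ for all $A, B$. The key identity to exploit is that for any two Killing fields, say $Y$ and $Z$, the directional derivative $X\big(\langle Y, Z\rangle\big)$ can be expanded via metric compatibility of $\nabla^g$ as $\langle \nabla^g_X Y, Z\rangle + \langle Y, \nabla^g_X Z\rangle$.

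First I would write out both sides of the claimed identity using this expansion. We have
\bdm
X\big(\langle Y, Z\rangle\big) \ = \ \langle \nabla^g_X Y, Z\rangle + \langle Y, \nabla^g_X Z\rangle, \qquad
Y\big(\langle X, Z\rangle\big) \ = \ \langle \nabla^g_Y X, Z\rangle + \langle X, \nabla^g_Y Z\rangle.
\edm
Adding these and aiming to show the sum vanishes, I would group the four terms so as to apply the Killing skew-symmetry of $Z$ to the pair $\langle Y, \nabla^g_X Z\rangle + \langle X, \nabla^g_Y Z\rangle$, rewriting each as $-\langle \nabla^g_Y Z, X\rangle$ type expressions, and to bring in the remaining two terms $\langle \nabla^g_X Y, Z\rangle + \langle \nabla^g_Y X, Z\rangle$. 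The crucial observation is that the constancy of $\langle X, Y\rangle$ gives $0 = \langle \nabla^g_Z X, Y\rangle + \langle X, \nabla^g_Z Y\rangle$ for the direction $Z$, and more importantly, differentiating in a general direction shows that $\langle \nabla^g_X Y, Z\rangle + \langle \nabla^g_Y X, Z\rangle$ is governed by the derivative of $\langle X, Y\rangle$ in direction $Z$ after using the Killing property of $X$ and $Y$ to trade derivatives.

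The cleanest route is to note that $\langle \nabla^g_X Y + \nabla^g_Y X, Z\rangle$ equals the Lie-derivative-type symmetric combination, and that using $\nabla^g_X Y - \nabla^g_Y X = [X,Y]$ together with the skew-symmetry of $\nabla^g X$ and $\nabla^g Y$ allows one to express $\langle \nabla^g_X Y, Z\rangle + \langle \nabla^g_Y X, Z\rangle$ purely in terms of $Z\big(\langle X, Y\rangle\big)$, which vanishes by hypothesis. Concretely, for Killing fields $X, Y$ the identity $\langle \nabla^g_Z X, Y\rangle + \langle \nabla^g_Z Y, X\rangle = Z\big(\langle X, Y\rangle\big) = 0$ combines with the skew-symmetry relations $\langle \nabla^g_Z X, Y\rangle = -\langle \nabla^g_Y X, Z\rangle$ and $\langle \nabla^g_Z Y, X\rangle = -\langle \nabla^g_X Y, Z\rangle$ to yield exactly $\langle \nabla^g_X Y, Z\rangle + \langle \nabla^g_Y X, Z\rangle = 0$. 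Substituting back into the sum $X\big(\langle Y,Z\rangle\big) + Y\big(\langle X,Z\rangle\big)$ and applying the Killing skew-symmetry of $Z$ to cancel the crossed terms then gives zero.

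**The main obstacle** I anticipate is purely bookkeeping: keeping the three Killing conditions (for $X$, $Y$, and $Z$ separately) straight while permuting the roles of the vector fields, and making sure the single constancy hypothesis on $\langle X, Y\rangle$ is invoked exactly where needed and not accidentally assumed for the other pairs. There is no deep geometric difficulty here—the statement is a linear identity in first covariant derivatives—but the proof hinges on correctly matching up the right pair of skew-symmetry relations so that the terms cancel, and it would be easy to misapply the Killing property to the wrong index. I would organize the computation so that the constancy of $\langle X, Y\rangle$ is used precisely once, to kill the symmetric combination $\langle \nabla^g_X Y + \nabla^g_Y X, Z\rangle$, with everything else following from the three Killing skew-symmetries.
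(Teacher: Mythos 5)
Your proposal is correct and follows essentially the same route as the paper: the paper first derives the identity $\nabla^g_X Y = -\nabla^g_Y X$ (pairing against an arbitrary $W$, using the Killing skew-symmetries of $X$ and $Y$ and the constancy of $\langle X,Y\rangle$ exactly once), then expands $X\big(\langle Y,Z\rangle\big)$ by metric compatibility and trades the cross term with the Killing property of $Z$ --- which is precisely your cancellation of $\langle \nabla^g_X Y + \nabla^g_Y X, Z\rangle$ together with the crossed-term cancellation. The only cosmetic difference is that you keep everything paired against $Z$ rather than stating the intermediate identity as an equation of vector fields.
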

\begin{proof}
For any vector field $W$, we obtain
\bdm
\langle \nabla^g_X Y , W \rangle \ = \ -  \langle \nabla^g_W Y,X \rangle \ = \ 
- \, W \big(\langle X,Y \rangle \big)  +  \langle  Y , \nabla^g_WX \rangle \ 
= \  - \, \langle \nabla^g_Y X , W \rangle  ,
\edm
i.\,e., $\nabla^g_X Y = - \nabla^g_Y X$. Then the result follows,
\bdm
X\big( \langle Y , Z \rangle \big) \, = \, \langle \nabla^g_XY , Z \rangle  
+  \langle Y , \nabla^g_XZ \rangle \, 
= \, - \, \langle \nabla^g_Y X , Z \rangle  - \langle X , \nabla^g_Y Z \rangle
\, =\, - 
Y \big( \langle X , Z \rangle \big)  . \qedhere
\edm
\end{proof}
\noindent
Denote by $R_{ijkl} = \kr^g(e_i,e_j,e_k,e_l)$ the coefficients of the
Riemannian curvature with respect to the $\nabla$-parallel frame $e_1 , \ldots
, e_n$. Since $\big[[e_i,e_j],e_k\big]$ is a Killing vector field, the latter 
Lemma reads as $e_m(R_{ijkl}) =  - \, e_l(R_{ijkm})$ .
If $m$ is one of the indices $i,j,k,l$, we obtain $e_m(R_{ijkl}) = 0$
immediately. Otherwise we use in addition 
the symmetry properties of the curvature tensor,
\begin{eqnarray*}
e_1(R_{2345}) &=& - \, e_5(R_{2341}) \ = \ - \, e_5(R_{1432}) \ = \ 
e_2(R_{1435}) \ = \ - \, e_2(R_{3541})  \\
&=&   e_1(R_{3542}) \ =\ e_1(R_{4235}).
\end{eqnarray*}
Similarly one derives $e_1(R_{4235}) = e_1(R_{3425})$
and the Bianchi identity $R_{2345} + R_{4235} + R_{3425} = 0$ yields the
result, $e_1(R_{2345})= 0$. Consequently, the coefficients are constant and we
proved the following
\begin{thm}[{see \cite{Cartan&Sch26b}, formula (25), \cite{DAtri&N68}},
  Theorem 3.6]\label{curv-parallel}
The Riemannian curvature tensor $\kr^g$ is $\nabla$- and $\nabla^g$-parallel.
In particular, 
\bdm
\big[ \, X \haken T \, , \, \kr^g \, \big] \ = \ 0
\edm
holds for any vector $X \in T(M^n)$.
\end{thm}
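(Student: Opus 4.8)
The plan is to convert the constancy of the curvature coefficients, already established in the paragraph preceding the theorem, into the two invariant parallelism statements, and then to extract the commutator identity simply by subtracting the two covariant derivatives.

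First I would observe that the computation $e_m(R_{ijkl}) = 0$ obtained above is, when read in the global $\nabla$-parallel frame $e_1,\ldots,e_n$, nothing other than $\nabla \kr^g = 0$. Indeed, since $\nabla_{e_m} e_i = 0$ for all $i,m$, every correction term in the Leibniz expansion of $(\nabla_{e_m}\kr^g)(e_i,e_j,e_k,e_l)$ drops out, leaving only $e_m(R_{ijkl})$. Thus the vanishing of these derivatives is literally the $\nabla$-parallelism of $\kr^g$; it is crucial here that the frame is parallel for $\nabla$, not for $\nabla^g$.

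Next I would deduce $\nabla^g \kr^g = 0$ from Corollary \ref{parallel-T-pol}. By the curvature formula of the first Proposition of this section, $\kr^g$ is a quadratic contraction of $T \otimes T$, hence a polynomial in the torsion form. The Corollary then gives $\nabla \kr^g = -2\,\nabla^g \kr^g$, so $\nabla \kr^g = 0$ forces $\nabla^g \kr^g = 0$.

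Finally, for the bracket identity I would subtract the two parallelism statements. Writing $\nabla_X = \nabla^g_X + \tfrac{1}{2}\,(X \haken T)$, where the $2$-form $X \haken T$, viewed as a skew-symmetric endomorphism in $\so(n)$, acts on tensors as a derivation (its action on the $3$-form $T$ was denoted $(X\haken T)[T]$ above), one gets $0 = (\nabla_X - \nabla^g_X)\kr^g = \tfrac{1}{2}\,(X \haken T)\cdot \kr^g$. The derivation action of a skew-symmetric endomorphism $A$ on the curvature operator is exactly the commutator $[A,\kr^g]$ once $A$ is extended to $\Lambda^2$, so this reads $[X \haken T, \kr^g] = 0$, as claimed. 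The only delicate point is precisely this bookkeeping—identifying "constant coefficients in the parallel frame" with the tensorial statement $\nabla \kr^g = 0$, and identifying the infinitesimal $\so(n)$-action with the commutator bracket of the theorem; once the conventions are fixed there is no remaining analytic content, everything being algebra in the $\nabla$-parallel frame.
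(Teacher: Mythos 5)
Your proposal is correct and follows essentially the same route as the paper: the constancy of the coefficients $R_{ijkl}$ in the $\nabla$-parallel frame is read as $\nabla \kr^g = 0$, Corollary \ref{parallel-T-pol} (with $\kr^g$ a quadratic polynomial in $T$) transfers this to $\nabla^g \kr^g = 0$, and subtracting the two connections yields $\big[\, X \haken T \,,\, \kr^g \,\big] = 0$. Your explicit bookkeeping of the factor $\tfrac{1}{2}$ and of the derivation action of $X \haken T$ is harmless and only makes precise what the paper leaves implicit.
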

\begin{proof}
$\kr^g$ is a polynomial depending on $T$. Consequently, $\nabla^g \kr^g = 0$
implies $\nabla \kr^g = 0$, see Corollary \ref{parallel-T-pol}. Hence,
the difference
\bdm
0 \ = \ (\nabla_X \, - \, \nabla^g_X) \kr^g \ = \ 
\big[ \, X \haken T \, , \, \kr^g \, \big]  
\edm
vanishes, too.
\end{proof}
\begin{cor}
Let $(M^n,g , \nabla, T)$ be a simply connected, complete and irreducible
Riemannian manifold equipped with a flat metric connection and totally
skew-symmetric torsion $T \neq 0$. Then $M^n$ is a compact, irreducible
symmetric space. Its Ricci tensor is given by
\bdm
\Ric^g(X,Y)\ =\ \frac{1}{4}\sum_{i=1}^n \langle T(X,e_i),T(Y,e_i) \rangle \ = 
\ \frac{\Scal^g}{n} \, \langle X
, Y \rangle \ , \quad
\Scal^g\ = \ \frac{3}{2}\|T\|^2  .
\edm
\end{cor}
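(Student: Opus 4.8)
The plan is to assemble the conclusion from the structural results already proved above. By Theorem~\ref{curv-parallel} the Riemannian curvature tensor satisfies $\nabla^g \kr^g = 0$, so $(M^n,g)$ is locally symmetric; being complete and simply connected, it is a globally symmetric space. Irreducibility was assumed, so $M^n$ is an irreducible symmetric space. To see that it is of \emph{compact} type (rather than flat or noncompact type), I would invoke the sectional curvature formula from the first Proposition,
\bdm
K(X,Y)\ =\ \frac{\|T(X,Y)\|^2}{4[\|X\|^2\|Y\|^2-\langle X,Y\rangle^2]}\ \geq\ 0 ,
\edm
which shows the sectional curvature is nonnegative everywhere. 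An irreducible symmetric space with nonnegative sectional curvature is of compact type or flat; but the flat case is excluded because $T \neq 0$ forces $K$ to be strictly positive on some plane (and a flat irreducible space would have to be one-dimensional, contradicting the existence of a nonzero $3$-form). Hence $M^n$ is a compact irreducible symmetric space, and in particular $M^n$ is compact.

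Next I would establish that the Ricci tensor is a constant multiple of the metric. An irreducible symmetric space is Einstein: this follows because $\Ric^g$ is a $\nabla^g$-parallel symmetric endomorphism, hence commutes with the holonomy representation, which is irreducible by assumption, so by Schur's lemma $\Ric^g = \lambda \, g$ for some constant $\lambda$. Taking the trace gives $\lambda \cdot n = \Scal^g$, whence $\lambda = \Scal^g/n$ and
\bdm
\Ric^g(X,Y)\ =\ \frac{\Scal^g}{n}\,\langle X,Y\rangle .
\edm
The explicit expressions $\Ric^g(X,Y)=\tfrac14\sum_i \langle T(X,e_i),T(Y,e_i)\rangle$ and $\Scal^g=\tfrac32\|T\|^2$ are precisely the formulas recorded in the first Proposition, so the full displayed identity in the Corollary is obtained by combining these with the Schur argument.

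The main obstacle I anticipate is the clean justification of the \emph{compact type} claim: one must rule out the flat and the noncompact-type alternatives using only the nonnegativity of $K$ together with $T \neq 0$, without appealing to the classification of symmetric spaces (which the introduction explicitly promises to avoid). The cleanest route is to argue that $T \neq 0$ together with irreducibility guarantees that $\kr^g$ is not identically zero, so $M^n$ is not flat; and that nonnegative sectional curvature on an irreducible symmetric space of dimension at least two excludes the noncompact dual, leaving only compact type. Everything else—local symmetry, the Einstein condition via Schur, and the Ricci and scalar curvature formulas—follows directly from the parallelism of $\kr^g$ and the identities already established.
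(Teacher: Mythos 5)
Your proposal is correct in substance, and its skeleton (parallel curvature $\Rightarrow$ locally symmetric $\Rightarrow$ globally symmetric by completeness and simple connectedness; Schur's lemma on the irreducible holonomy representation $\Rightarrow$ Einstein; the explicit Ricci and scalar curvature formulas quoted from the first Proposition) is exactly what the paper leaves implicit --- the Corollary is stated there without proof as an immediate consequence of Theorem~\ref{curv-parallel} and the Proposition. Where you diverge is the compactness step, and here your route is more roundabout than it needs to be: you pass through nonnegative sectional curvature and the compact/noncompact/Euclidean trichotomy of irreducible symmetric spaces, which quietly imports structure theory of symmetric spaces --- in mild tension with the paper's announced goal of avoiding such input. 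The shorter argument is already in your own second paragraph: since $\|T\|$ is constant by the first Proposition and $T\neq 0$, the scalar curvature $\Scal^g=\tfrac{3}{2}\|T\|^2$ is a \emph{positive} constant, so once Schur gives $\Ric^g=(\Scal^g/n)\,g$ the manifold is Einstein with positive constant, and Myers' theorem yields compactness directly --- no appeal to duality or type of symmetric spaces, and no separate exclusion of the flat case (positive scalar curvature rules it out at once). Note also that in this setting your flat-case discussion could be replaced by the paper's own pointwise observation that $\Ric^g(X,X)=0$ iff $X\haken T=0$, so Ricci-flatness would force $T=0$. In short: your proof works, but the Einstein-plus-Myers route is both cleaner and closer to the spirit of the paper.
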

\noindent
Since $\sigma_T$ is $\nabla^{1/3}$-parallel, there are two cases. If $\sigma_T
\equiv 0$, then the scalar products $\langle [e_i,e_j],e_k \rangle$ are constant, i.e.,  
the vector fields $e_1, , \ldots , e_n$ are a basis of a $n$-dimensional Lie
algebra. The corresponding simply connected Lie group is a simple, compact Lie
group and isometric to $M^n$. The torsion form of the flat connection is
defined by $T(e_k , e_l) = - \, [e_k , e_l]$ (see \cite{Kobayashi&N2}, chapter
X). \\

\noindent
The case $\sigma_T \not\equiv 0$ is more complicated. Since $\sigma_T$ is
a $4$-form, the dimension of the manifold is at least four. 
Cartan/Schouten (1926) proved that only the 
$7$-dimensional round sphere is possible. A different argument has 
been used by
D'Atri/Nickerson (1968) and Wolf (1972),  namely the
classification of irreducible, compact symmetric spaces with vanishing Euler 
characteristic. This list is very short. Except the compact, simple Lie groups
most of them do not admit Killing vector field of constant length. \\

\noindent
We provide now a new proof that does not use the
classification of symmetric spaces. 
Consider, at any point $m \in M^n$, the Lie algebra
\bdm
\hat{\g}_T(m) \ := \ Lie \big\{\, X \haken T \, : \ X \in T_m(M^n) \,
\big\} \ \subset \ \so(T_m(M^n))  ,
\edm 
that was introduced in \cite{Agri&F04} for the systematic investigation of
algebraic holonomy algebras. Since $T$ is $\nabla^{1/3}$-parallel, the algebras
$\hat{\g}_T(m)$ are $\nabla^{1/3}$-parallel, too.
\begin{prop}
Let $(M^n,g , \nabla, T)$ be a simply connected, complete and irreducible
Riemannian manifold equipped with a flat metric connection and totally
skew-symmetric torsion $T \neq 0$. Then the representation $(\hat{\g}_T(m) ,
T_m(M^n))$ is irreducible.
\end{prop}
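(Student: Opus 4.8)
The plan is to show that $\hat{\g}_T(m)$ contains the Riemannian holonomy algebra $\mathfrak{hol}(m)\subseteq\so(T_m(M^n))$, and then to transfer irreducibility from the (smaller) holonomy representation to $(\hat{\g}_T(m),T_m(M^n))$.

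First I would rewrite the curvature endomorphism entirely in terms of the generators $X\haken T$, viewed as the skew-symmetric endomorphism $Y\mapsto T(X,Y)$. Starting from the explicit expression for $\kr^g$ in the first Proposition and using only the cyclic symmetry of the $3$-form $T$, a direct computation gives
\[
\kr^g(X,Y)\ =\ -\,\frac{1}{6}\,\big(T(X,Y)\haken T\big)\ -\ \frac{1}{12}\,\big[\,X\haken T\,,\,Y\haken T\,\big]\,,
\]
the coefficients being confirmed by the Lie-group case $\sigma_T\equiv 0$, where $X\haken T=-\ad X$ and the identity reduces to $\kr^g(X,Y)=-\frac14\,\ad[X,Y]$. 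Both summands on the right belong to $\hat{\g}_T(m)$: the first because $T(X,Y)\in T_m(M^n)$ makes $T(X,Y)\haken T$ one of the defining generators, the second because $\hat{\g}_T(m)$ is by construction closed under the bracket. Hence every curvature operator $\kr^g(X,Y)$ lies in $\hat{\g}_T(m)$.

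Next, since $\kr^g$ is $\nabla^g$-parallel by Theorem \ref{curv-parallel}, parallel transport carries curvature operators at $m$ to curvature operators at $m$, so the Ambrose--Singer holonomy theorem identifies $\mathfrak{hol}(m)$ with $\mathrm{span}\{\kr^g(X,Y):X,Y\in T_m(M^n)\}$; by the previous step this span sits inside the Lie subalgebra $\hat{\g}_T(m)$, giving $\mathfrak{hol}(m)\subseteq\hat{\g}_T(m)$. Finally, $M^n$ is simply connected, complete, irreducible and non-flat (indeed $\Scal^g=\frac{3}{2}\|T\|^2>0$ because $T\neq 0$), so the de Rham decomposition theorem guarantees that the holonomy representation $(\mathfrak{hol}(m),T_m(M^n))$ is irreducible. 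Any $\hat{\g}_T(m)$-invariant subspace is then in particular $\mathfrak{hol}(m)$-invariant, hence $0$ or all of $T_m(M^n)$, which proves the claim.

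The main obstacle is the first step: producing the clean expression of $\kr^g$ purely through the generators $X\haken T$ and their commutators, for this is exactly what forces $\mathfrak{hol}(m)$ inside $\hat{\g}_T(m)$. Once that containment is in hand the irreducibility is immediate from de Rham, the larger algebra $\hat{\g}_T(m)$ inheriting irreducibility from the smaller algebra $\mathfrak{hol}(m)$.
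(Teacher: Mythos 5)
Your proof is correct, but it takes a genuinely different route from the paper's. The paper argues by contradiction and never touches the holonomy algebra: a $\hat{\g}_T$-invariant splitting of one tangent space propagates, by $\nabla^{1/3}$-parallel transport (recall $\nabla^{1/3}T=0$), to a parallel decomposition $T(M^n)=V_1\oplus V_2$ of the tangent bundle along which the torsion splits as $T=T_1+T_2$ with $T_i\in\Lambda^3(V_i)$ (a lemma from the holonomy paper of Agricola--Friedrich); the subbundles are then involutive with totally geodesic leaves, contradicting Riemannian irreducibility --- in particular, the paper's proof of this proposition uses neither the explicit curvature formula nor Theorem \ref{curv-parallel}. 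You instead establish the containment $\mathfrak{hol}(m)\subseteq\hat{\g}_T(m)$, and your key identity is indeed a correct rewriting of the curvature formula from the first proposition of Section 2: setting $A_X=X\haken T$, skew-symmetry gives $\langle T(Y,Z),T(X,V)\rangle=-\langle A_XA_YZ,V\rangle$ and $\langle T(Z,X),T(Y,V)\rangle=\langle A_YA_XZ,V\rangle$, so the two $\tfrac{1}{12}$-terms combine into $-\tfrac{1}{12}\langle[A_X,A_Y]Z,V\rangle$ and $\kr^g(X,Y)=-\tfrac{1}{6}A_{T(X,Y)}-\tfrac{1}{12}[A_X,A_Y]$, consistent with $\kr^g(X,Y)Z=-\tfrac14[[X,Y],Z]$ in the group case; since $\nabla^g\kr^g=0$ by Theorem \ref{curv-parallel}, the Ambrose--Singer generators $\tau^{-1}\circ\kr^g(\tau X,\tau Y)\circ\tau$ all equal curvature operators at $m$ itself, which lie in the Lie algebra $\hat{\g}_T(m)$, and irreducibility transfers from the holonomy representation (irreducible by de Rham, as $M^n$ is simply connected, complete, irreducible and non-flat because $\Scal^g=\tfrac32\|T\|^2>0$) to the larger algebra. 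The trade-off: your argument needs the full strength of Theorem \ref{curv-parallel} --- essentially the symmetric-space property --- which the paper's proof avoids, while the paper instead needs the splitting result for $3$-forms under a reducible $\hat{\g}_T$-action; in exchange you obtain the stronger and reusable fact $\mathfrak{hol}(m)\subseteq\hat{\g}_T(m)$, which anticipates the subsequent dichotomy where $\hat{\g}_T=\so(n)$ forces constant curvature. One small point of care: phrase the Ambrose--Singer step as above (parallel curvature makes each conjugated operator equal to some $\kr^g(X',Y')$ at $m$), since ``parallel transport carries curvature operators at $m$ to curvature operators at $m$'' is slightly elliptical as written, though your intended argument is sound.
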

\begin{proof}
Suppose that the tangent space splits at some point. Then any tangent space
splits and we obtain a $\nabla^{1/3}$-parallel decomposition $T(M^n) = V_1
\oplus V_2$ of the tangent bundle into two subbundles. Moreover, the torsion
form $T = T_1 \, + \, T_2$ splits into $\nabla^{1/3}$-parallel forms
$T_1 \in \Lambda^3(V_1)$ and  $T_2 \in \Lambda^3(V_2)$, see \cite{Agri&F04}.
The subbundles $V_1 , V_2$ are involutive and their
leaves are totally geodesic submanifolds of $(M^n,g)$. This contradicts the
assumption that $M^n$ is an irreducible Riemannian manifold.
\end{proof}
\noindent
If the Lie algebra $\hat{\g}_T \subset \so(n)$ of a $3$-form acts irreducibly
on the euclidian space, then there are two possibilities. 
Either the $3$-form of the euclidian space satisfies the Jacobi
identity or  the Lie algebra coincides with the full algebra, 
$\hat{\g}_T = \so(n)$ (see \cite{Agri&F04}, \cite{Nagy07}, \cite{OR08}). 
The first case again yields the result that the manifold $M^n$ is a
simple Lie group (we recover the case of $\sigma_T = 0$). Otherwise the Lie
algebra $\hat{g}_T$ coincides with $\so(n)$ and Theorem \ref{curv-parallel} 
implies that $M^n$
is a space of positive constant curvature,
$\kr^g =  c \cdot \mathrm{Id}$.
The formula for the sectional curvature
\bdm
K \ = \ K(X,Y)\ =\ \frac{\|T(X,Y)\|^2}{4[\|X\|^2\|Y\|^2-\langle X,Y\rangle^2]}
\edm
means that the $3$-form $T$ defines a metric vector cross product. 
Consequently, the dimension of the sphere is seven.
\begin{thm} [{see \cite{Cartan&Sch26b}}]
Let $(M^n,g , \nabla, T)$ be a simply connected, complete and irreducible
Riemannian manifold equipped with a flat metric connection and totally
skew-symmetric torsion $T \neq 0$. If $\sigma_T = 0$, then $M^n$ is isometric
to a compact simple Lie group. Otherwise ($\sigma_T \neq 0$)  $M^n$ is
isometric to $S^7$. 
\end{thm}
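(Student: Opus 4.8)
The plan is to organise the argument around the dichotomy already isolated by the preceding results---whether the $4$-form $\sigma_T$ vanishes or not---and to work throughout with the global $\nabla$-parallel orthonormal frame $e_1,\ldots,e_n$ of Killing fields, for which $[e_k,e_l]=-T(e_k,e_l)$. By the corollary following Theorem~\ref{curv-parallel}, $M^n$ is already a compact, irreducible, Einstein symmetric space, so only its precise Lie-theoretic identification is left to settle.

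First I would dispose of the case $\sigma_T=0$. The formula $\nabla_V T=-\tfrac13\,(V\haken\sigma_T)$ then gives $\nabla T=0$, whence $e_k\big(\langle[e_i,e_j],e_l\rangle\big)=-\tfrac13\,\sigma_T(e_i,e_j,e_l,e_k)=0$, so all structure functions $\langle[e_i,e_j],e_l\rangle$ are constant. Thus the $e_i$ span a Lie algebra $\g$ with constant structure constants, and the associated simply connected Lie group, carrying the induced biinvariant metric, is isometric to $M^n$ by completeness and simple connectivity (cf.\ \cite{Kobayashi&N2}). Irreducibility of $M^n$ forces $\g$ to be simple and the positivity of $\Ric^g$ forces it to be compact, which is the first alternative.

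The substantial case is $\sigma_T\neq0$. Here I would invoke the preceding proposition that $(\hat{\g}_T(m),T_m(M^n))$ is irreducible, together with the algebraic dichotomy for a Lie algebra generated by a $3$-form and acting irreducibly \cite{Agri&F04}, \cite{Nagy07}, \cite{OR08}: either $T$ satisfies the Jacobi identity---sending us back to the Lie-group alternative---or $\hat{\g}_T=\so(n)$. In the latter situation Theorem~\ref{curv-parallel} supplies $[\,X\haken T,\kr^g\,]=0$ for every $X$, so the curvature $\kr^g$ commutes with all of $\hat{\g}_T=\so(n)$; by Schur's lemma $\kr^g=c\cdot\Id$ and $M^n$ has constant positive curvature. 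Substituting this into the sectional-curvature formula turns it into $\|T(X,Y)\|^2=4c\,(\|X\|^2\|Y\|^2-\langle X,Y\rangle^2)$, the defining identity of a two-fold metric vector cross product (the orthogonality $\langle T(X,Y),X\rangle=0$ being automatic since $T$ is a $3$-form).

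The main obstacle, to my mind, is the algebraic dichotomy for $\hat{\g}_T$: establishing rigorously that an irreducibly acting Lie algebra generated by a single $3$-form is \emph{either} of Lie-bracket type \emph{or} all of $\so(n)$ is the genuinely hard input, which I would simply cite. Once it is in hand the rest is assembly: the vector-cross-product identity forces, by the Brown--Gray classification, $n\in\{3,7\}$, and since $\sigma_T\neq0$ cannot occur below dimension four, only $n=7$ remains, giving $M^7\cong S^7$. The one point needing care is to confirm that the two branches are exhaustive and that the cross-product step does not covertly reuse the irreducibility hypothesis.
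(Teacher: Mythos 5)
Your proposal is correct and follows essentially the same route as the paper: the case $\sigma_T=0$ via constant structure functions $\langle[e_i,e_j],e_l\rangle$ yielding a compact simple Lie group, and the case $\sigma_T\neq 0$ via irreducibility of $(\hat{\g}_T,T_mM)$, the cited skew-torsion holonomy dichotomy of \cite{Agri&F04}, \cite{Nagy07}, \cite{OR08}, Schur's lemma giving $\kr^g=c\cdot\Id$, and the sectional-curvature formula turning $T$ into a metric vector cross product forcing dimension seven. The few details you add (Brown--Gray for $n\in\{3,7\}$ and excluding $n=3$ because $\sigma_T$ is a $4$-form) merely make explicit what the paper leaves implicit.
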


\section{The case of vectorial torsion}
%
%
\noindent
By definition, such a connection $\nabla$ is given by 
\bdm
\nabla_X Y\ =\ \nabla^g_X Y + \langle X,Y \rangle V - \langle V,Y \rangle X
\edm
for some vector field $V$. The general relation between  the curvature
transformations for $\nabla$ and $\nabla^g$ 
\cite[App. B, proof of Thm 2.6(1)]{Agricola06} reduces to
\bdm
\kr^g(X,Y)Z\ =\ \langle X,Z\rangle\nabla_Y V 
-\langle Y,Z\rangle\nabla_X V  
+ Y\langle \nabla_X V+ \|V\|^2 X,Z\rangle
- X\langle \nabla_Y V+ \|V\|^2 Y,Z\rangle.
\edm
Hence, the curvature depends not only on $V$, but also on $\nabla V$.
This remains true when considering the Ricci tensor, which does not
simplify much. However, the following claim may be read off
immediately: If $\nabla V=0$, then $M^n$ is a {\it non-compact} 
space of constant
negative sectional curvature $ - \, \|V \|^2$ and the divergence of the vector
field $V$ is constant, $\delta^g(V) = (n - 1 ) \|V
\|^2 = \mathrm{const} > 0$. Moreover, the integral curves of the vector field
$V$ are geodesics in $M^n$, $\nabla^g_V V = 0$. In 
\cite{Tricerri&V1}, this case is discussed in detail; in particular,
a flat metric connection with vectorial torsion is explicitely
constructed. \\

\noindent
For the general case ($\nabla V\neq 0$), the first Bianchi identity
\cite[Thm 2.6]{Agricola06} for a flat connection
\bdm
0 \ = \ \cyclic{X,Y,Z}\kr(X,Y)Z\ =\  \cyclic{X,Y,Z} dV(X,Y)Z.
\edm
yields and interesting consequence: For $\dim M\geq 3$, $X,Y,Z$ can be chosen
linearly independent, hence $dV=0$ and $V$ is locally a gradient field.
Observe that a routine calculation shows that $dV(X,Y)=0$ for all $X$ and 
$Y$ is equivalent to $\langle \nabla^g_X V,Y\rangle = 
\langle \nabla^g_Y V,X\rangle$, and one checks that the same property holds
for $\nabla^g$ replaced by $\nabla$. The triple $(M^n , g , V)$ defines a Weyl
structure, i.e., a conformal class  of Riemannian metrics and a torsion
free connection $\nabla^w$ preserving the conformal class. In general , the Weyl connection
and its curvature tensor are given by the formulas
\begin{eqnarray*}
\nabla^{w}_X Y &=& \nabla^g_X Y\,+\,g \langle X\, ,\, V \rangle 
\, Y\,+\, 
\langle Y \, , \, V \rangle \, X \, - \, \langle X \, , \, Y \rangle 
\, V \, , \\
\mathcal{R}^{\nabla}(X , Y) Z &=& \mathcal{R}^{w}(X,Y)Z \,
- \, d V(X , Y) \, Z \, .
\end{eqnarray*}
The connection $\nabla$ with vectorial torsion is flat if and only if $d V =
0$ and the Weyl connection is flat, $ \mathcal{R}^{w} = 0$.
\begin{prop}
There is a correspondence between triples $(M^n, g, \nabla), \, n \geq 3,$ of Riemannian
manifolds and flat metric connections $\nabla$ with vectorial torsion 
and closed, flat Weyl structures.
\end{prop}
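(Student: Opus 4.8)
The plan is to treat the proposition as a clean repackaging of the curvature identity $\mathcal{R}^{\nabla}(X,Y)Z = \mathcal{R}^{w}(X,Y)Z - dV(X,Y)Z$ established just above, so that the proof amounts to checking that the two sides of the asserted correspondence carry the same data. First I would note that, for a fixed background $(M^n,g)$, a metric connection with vectorial torsion is completely encoded by its torsion vector field $V$ via $\nabla_X Y = \nabla^g_X Y + \langle X,Y\rangle V - \langle V,Y\rangle X$, and conversely every $V$ defines such a $\nabla$; hence the triples $(M^n,g,\nabla)$ correspond bijectively to vector fields $V$ on $(M^n,g)$.

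Next I would verify that the displayed formula for $\nabla^w$ really defines a Weyl connection for the conformal class $[g]$. Torsion-freeness is immediate, since the correction term $\langle X,V\rangle Y + \langle Y,V\rangle X - \langle X,Y\rangle V$ is symmetric in $X$ and $Y$, so $\nabla^w$ has the same (vanishing) torsion as $\nabla^g$. A one-line computation then gives $(\nabla^w_X g)(Y,Z) = -2\langle X,V\rangle\langle Y,Z\rangle$, i.e. $\nabla^w g = -2\,V^\flat \otimes g$, which is precisely the condition that $\nabla^w$ preserve the conformal class, with Weyl one-form proportional to $V^\flat$. By definition such a Weyl structure is \emph{closed} exactly when $dV = 0$ and \emph{flat} exactly when $\mathcal{R}^w = 0$.

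The correspondence then follows by matching conditions. From $\mathcal{R}^{\nabla} = \mathcal{R}^{w} - dV$ together with the consequence of the first Bianchi identity that for $n\geq 3$ flatness of $\nabla$ forces $dV = 0$, the assertion ``$\nabla$ is flat'' is equivalent to the conjunction ``$dV = 0$ and $\mathcal{R}^w = 0$'', that is, to ``$(M^n,[g],\nabla^w)$ is a closed, flat Weyl structure''. The forward map sends $(M^n,g,\nabla)\mapsto(M^n,[g],\nabla^w)$, and the backward map takes a closed flat Weyl structure, uses $g$ as the gauge to recover $V$ from the difference $\nabla^w - \nabla^g$, and reconstructs $\nabla$ by the vectorial-torsion formula. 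Since both maps are the identity on the shared parameter $V$, they are mutually inverse.

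The step I expect to be the main obstacle is making the converse direction precise, namely the interplay between the fixed metric $g$ and the conformal class $[g]$: a Weyl structure a priori lives on $(M^n,[g])$, so one must fix $g$ as a gauge to read off $V^\flat$ unambiguously, and then check that every closed, flat Weyl connection arises in this way from some $V$ with $dV = 0$. Once the gauge is fixed the remaining verifications---torsion-freeness and conformal compatibility of $\nabla^w$, and the curvature bookkeeping---are routine given the identities already in hand.
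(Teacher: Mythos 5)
Your proposal is correct and follows essentially the same route as the paper: the paper's proof consists precisely of the curvature identity $\mathcal{R}^{\nabla}=\mathcal{R}^{w}-dV\cdot\mathrm{Id}$ together with the first-Bianchi consequence that flatness of $\nabla$ forces $dV=0$ when $n\geq 3$, yielding the equivalence ``$\nabla$ flat $\Leftrightarrow$ $dV=0$ and $\mathcal{R}^{w}=0$''. Your additional verifications (torsion-freeness, $\nabla^w g=-2\,V^\flat\otimes g$, and the gauge-fixing needed to recover $V$ in the converse direction) merely make explicit details the paper leaves implicit.
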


\noindent
In particular, if a Riemannian manifold $(M^n,g)\, , \, n \geq 3,$ admits a flat metric
connection with vectorial torsion, then it is locally conformal flat (the Weyl
tensor vanishes). Moreover, we can apply Theorem 2.1. and Proposition 2.2. of
the paper \cite{Agri&F06}. If $M^n$ is compact, then its universal covering
splits and is conformally equivalent to $S^{n-1} \times \R^1$.\\

\noindent
Let us discuss the exceptional dimension two. In this case, the curvature
$\mathcal{R}^{\nabla}$ is completely defined by {\it one} function, namely
$\langle \mathcal{R}^{\nabla}(e_1, e_2)e_1,e_2 \rangle$. Using the formula 
for the
Riemannian curvature tensor we compute this function and then we obtain immediately
\begin{prop}
Let $(M^2,g)$ be a $2$-dimensional Riemannian manifold with Gaussian curvature
$G$. A metric connection with vectorial torsion is flat if and only if
\bdm
G \ = \ \mathrm{div}^g(V)
\edm 
holds. In particular, if $M^2$ is compact, then $M^2$ diffeomorphic to the
torus or the Klein bottle.
\end{prop}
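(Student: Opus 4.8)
The plan is to establish the curvature condition first and then read off the topological conclusion. For the curvature condition, I would start from the general formula for $\kr^\nabla$ in terms of $V$ and $\nabla V$ stated in the excerpt, specialize to dimension two, and use that the entire Riemannian curvature in two dimensions is encoded in the single number $\langle\kr^\nabla(e_1,e_2)e_1,e_2\rangle$ (equivalently, the Gaussian curvature $G$). Since we already know from the Bianchi-identity discussion that flatness forces $dV=0$, the only surviving obstruction to $\kr^\nabla=0$ is the part of the curvature formula that does not involve the antisymmetric piece $dV$. Concretely, I would compute $\langle \kr^\nabla(e_1,e_2)e_1,e_2\rangle$ using an orthonormal frame $e_1,e_2$, plug into the displayed formula
\[
\kr^g(X,Y)Z \;=\; \langle X,Z\rangle\nabla_Y V -\langle Y,Z\rangle\nabla_X V + Y\langle \nabla_X V+ \|V\|^2 X,Z\rangle - X\langle \nabla_Y V+ \|V\|^2 Y,Z\rangle,
\]
and collect the terms. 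The expectation is that after imposing $dV=0$ (so the skew part $\langle\nabla^g_X V,Y\rangle=\langle\nabla^g_Y V,X\rangle$ holds) the flatness equation $\kr^\nabla=0$ collapses to $G=\delta^g V=\mathrm{div}^g(V)$ up to the usual sign convention $\mathrm{div}^g(V)=-\delta^g(V)$, which I would track carefully.

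The main point to get right is the bookkeeping of the $\nabla V$ terms together with the $\|V\|^2$ terms: in two dimensions the trace $\sum_i\langle\nabla_{e_i}V,e_i\rangle$ is exactly $\mathrm{div}^g(V)$ (modulo the torsion correction relating $\nabla$ and $\nabla^g$, which in the vectorial case is $\nabla_X Y=\nabla^g_X Y+\langle X,Y\rangle V-\langle V,Y\rangle X$), and the $\|V\|^2$ contributions from the two summands should cancel against each other once the symmetry $dV=0$ is used. I expect this cancellation to be the one place where a sign or factor could slip, so I would verify it by direct substitution of $X=Z=e_1$, $Y=e_2$ rather than manipulating abstractly. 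This is the technical heart of the proposition, though it is a short finite computation once the frame is fixed.

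For the topological conclusion, the argument is via Gauss--Bonnet. If $M^2$ is compact and oriented, then $\int_{M^2} G\,\de\vol = 2\pi\,\chi(M^2)$, while $\int_{M^2}\mathrm{div}^g(V)\,\de\vol=0$ by the divergence theorem on a closed manifold. Combining these with the just-proved identity $G=\mathrm{div}^g(V)$ gives $\chi(M^2)=0$, forcing $M^2$ to be the torus in the orientable case. In the non-orientable case one passes to the orientable double cover, where the lifted data still satisfy $G=\mathrm{div}^g(\tilde V)$, concludes the cover has vanishing Euler characteristic, and identifies the quotient as the Klein bottle; equivalently one invokes that a closed surface with $\chi=0$ is exactly the torus or the Klein bottle by the classification of surfaces. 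I would state explicitly that completeness is used only to guarantee that compactness makes the divergence integral vanish.

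The hard part is not conceptual but purely a matter of convention control: reconciling the curvature sign convention implicit in the displayed $\kr^g$ formula, the sign relating $\delta^g$ and $\mathrm{div}^g$, and the Gaussian curvature normalization so that the final equation reads $G=\mathrm{div}^g(V)$ with the stated sign and no spurious factor of $2$. Everything else reduces to one orthonormal-frame evaluation and a standard application of Gauss--Bonnet.
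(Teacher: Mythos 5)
Your overall strategy---evaluate the single curvature function $\langle \mathcal{R}^\nabla(e_1,e_2)e_1,e_2\rangle$ in an orthonormal frame via the displayed formula, then obtain the topology from Gauss--Bonnet together with the divergence theorem on a closed surface---is exactly the paper's argument (the paper compresses the curvature step into one sentence before the proposition and leaves Gauss--Bonnet implicit). But there is one genuine logical flaw in your plan: you propose to ``impose $dV=0$,'' citing the Bianchi-identity discussion. That discussion is explicitly restricted to $\dim M\geq 3$. The first Bianchi identity for a flat connection with vectorial torsion says that the cyclic sum of $dV(X,Y)Z$ over $X,Y,Z$ vanishes, and in dimension two this holds identically for every $V$: taking $X=Z=e_1$, $Y=e_2$, the sum is $dV(e_1,e_2)e_1+dV(e_2,e_1)e_1+dV(e_1,e_1)e_2=0$. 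So in dimension two flatness does \emph{not} force $dV=0$---this is precisely why the paper treats $n=2$ as the exceptional dimension, where the flatness criterion is a single scalar equation rather than the closed-flat-Weyl characterization valid for $n\geq 3$. As written, your ``if'' direction (from $G=\mathrm{div}^g(V)$ conclude $\mathcal{R}^\nabla=0$) would silently assume $dV=0$, a hypothesis absent from the statement, so the asserted equivalence would not be proved.

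The good news is that the computation you outline never needs $dV=0$, so the fix is simply to delete that step. In the single component $\langle \mathcal{R}^\nabla(e_1,e_2)e_1,e_2\rangle$ only the trace of $\nabla^g V$ survives---the skew part $dV(e_1,e_2)$ does not appear at all---and the $\|V\|^2$ contributions cancel by the purely two-dimensional identity $\|V\|^2=\langle V,e_1\rangle^2+\langle V,e_2\rangle^2$, not by any symmetry of $\nabla^g V$. Concretely, with $\nabla_XY=\nabla^g_XY+\langle X,Y\rangle V-\langle V,Y\rangle X$ one finds
\[
\langle \mathcal{R}^\nabla(e_1,e_2)e_1,e_2\rangle \ =\ \langle \mathcal{R}^g(e_1,e_2)e_1,e_2\rangle\ -\ \sum_{i=1}^2\langle\nabla^g_{e_i}V,e_i\rangle,
\]
so flatness is equivalent to the stated identity once the sign conventions for $G$, $\delta^g$ and $\mathrm{div}^g$ are fixed coherently; your worry on this point is justified, and the conventions can be calibrated against the paper's own $\nabla V=0$ example, where the curvature is the constant $-\|V\|^2$ and $\delta^g(V)=(n-1)\|V\|^2$. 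Your Gauss--Bonnet argument, including the passage to the orientable double cover in the non-orientable case, is correct and is what the paper intends; note only that completeness plays no role here---compactness alone makes $\int_{M^2}\mathrm{div}^g(V)\,\mathrm{dvol}=0$ and hence $\chi(M^2)=0$.
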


\section{A family of flat connections on $S^7$}\label{fam-conn}\noindent
%
\subsection{Construction}
In dimension $7$, the complex $\Spin(7)$-representation $\Delta^\C_7$
is the complexification of a real $8$-dimensional representation 
$\kappa: \, \Spin(7)\ra \End(\Delta_7)$, 
since the real Clifford algebra $\mathcal{C}(7)$ is isomorphic to
$\M(8)\oplus \M(8)$. Thus, we may identify  $\R^8$ with the vector space
$\Delta_7$ and embed therein the sphere $S^7$ as the set of all 
spinors of length one. Fix your favorite explicit realization of the
spin representation by skew matrices, 
$\kappa_i:=\kappa(e_i)\in\so(8)\subset \End(\R^8)$, $i=1,\ldots,7$. 
We shall use it to define an explicit parallelization of $S^7$ by
Killing vector fields.
Define  vector fields $V_1,\ldots,V_7$  on $S^7$ by
\bdm
V_i(x)\ =\ \kappa_i \cdot x \text{ for }x\in S^7\subset \Delta_7.
\edm
>From the antisymmetry of $\kappa_1,\ldots,\kappa_7$, 
we easily deduce  the following properties for these vector fields:
\begin{enumerate}
\item They are indeed tangential to $S^7$, $\langle V_i (x),x\rangle = 0$.
\item They are of constant length one,
\bdm
\langle V_i(x),V_i(x)\rangle\, 
=\,  \langle \kappa_i x ,\kappa_i x\rangle
\, =\,  -\langle \kappa_i^2 x , x\rangle\, =\,  -\langle (-1)\cdot x, x\rangle
\ =\ 1.
\edm
\item They are pairwise orthogonal ($i\neq j$).
\end{enumerate}
The commutator of vector fields is inherited from the ambient space,
hence  $[V_i(x),V_j(x)]=[\kappa_i,\kappa_j](x)= 2\kappa_i\kappa_j x$
for $i\neq j$.
In particular, one checks immediately that $[V_i(x),V_j(x)]$ is again
tangential to $S^7$, as it should be. Furthermore,  the
vector fields $V_i(x)$ are Killing.
We now define a connection $\nabla$ on $TS^7$ by $\nabla V_i(x)=0$;
observe that this implies that all tensor fields with constant coefficients
are parallel as well.
This connection  is trivially flat and metric, and its torsion is given by ($i\neq j$)
\bdm
T(V_i,V_j,V_k)(x)\, =\, -\langle [V_i,V_j],V_k\rangle\, =\,
-2\langle \kappa_i \kappa_jx,\kappa_k x\rangle\, =\,
2\langle \kappa_i\kappa_j \kappa_k x,x\rangle.
\edm
If $k$ is equal to $i$ or $j$, this quantity vanishes, otherwise
the laws of Clifford multiplication imply that it is antisymmetric
in all three indices. Observe that this final expression is also valid for
$i = j$, though the intermediate calculation is not. 
Thus, the torsion lies in  $\Lambda^3(S^7)$
as wished, and can be written as
\bdm\tag{$*$}
T(x)\ = \ 2\sum_{i<j<k} \langle \kappa_i\kappa_j \kappa_k x,x \rangle
(V_i\wedge V_j\wedge V_k)(x)\ .
\edm
Since in general $\nabla_X Y = \nabla^g_XY+ T(X,Y,-)/2$, the
definition of $\nabla$ can equivalently be described for the Levi-Civita
connection $\nabla^g$ by
\bdm
\nabla^g_{V_i}V_j\ =\ \left\{\begin{array}{ll} \kappa_i\kappa_j x &
\text{ for }i\neq j \\ 0 & \text{ for }i=j\end{array}\right. .
\edm
$T$ is not $\nabla$-parallel, as it does not have constant
coefficients. Of course, the choice of the vector fields $V_1(x),\ldots,V_7(x)$ is
arbitrary: they can be replaced by any other orthonormal frame
$W_i(x):=A\cdot V_i(x)$ for a transformation $A\in\SO(7)$. However,
any $A\in \mathrm{Stab}\, T \cong G_2\subset \SO(7)$ will yield the same 
torsion and hence connection, thus we obtain a family of 
connections with $7=\dim\SO(7)-\dim G_2$ parameters.
\subsection{$\nabla$ as a $G_2$ connection}
%
The connection $\nabla$ is best understood  from the
point of view of $G_2$ geometry. Recall (see \cite[Thm 4.8]{Friedrich&I1})
that a $7$-dimensional Riemannian manifold $(M^7,g)$ with a fixed $G_2$ 
structure  $\omega\in \Lambda^3(M^7)$ admits a `characteristic' connection 
$\nabla^c$
(i.\,e., a metric $G_2$ connection with antisymmetric torsion) if and only
if it is of  Fernandez-Gray type
$\X_1\oplus\X_3\oplus\X_4$ (see \cite{FG} and \cite[p.~53]{Agricola06} for
this notation).
Furthermore, if existent, $\nabla^c$ is unique, the torsion of $\nabla^c$
is given by 
\be\tag{$**$}
T^c\ = \  -*d\omega-\frac{1}{6}\langle d\omega,*\omega\rangle \omega +
*(\theta\wedge\omega),
\ee
where $\theta$ is the $1$-form that describes
the $\X_4$-component defined by 
$\delta^g(\omega) = - \, (\theta \haken \omega)$.\\

\noindent
Now, \emph{any} generic $3$-form 
$\omega\in\Lambda^3(S^7)$  that is parallel with respect to our connection 
$\nabla$ admits $\nabla$ as its characteristic connection, and is related to
the torsion $T$ given in $(*)$ by the general formula $(**)$.
Thus, there is a large family of $G_2$ structures $\omega$ (namely, all 
generic $3$-forms with constant coefficients) that  induce
the flat connection $\nabla$ as their $G_2$ connection. Let us discuss the possible type of the $G_2$ structures $\omega$
inducing $\nabla$.  
One sees immediately that none of these $G_2$ structures can be nearly 
parallel (type $\X_1$), since $T$ fails to be parallel.
A more elaborate argument shows that they cannot even be
cocalibrated (type $\X_1\oplus \X_3$): by \cite[Thm 5.4]{Friedrich&I1},
a cocalibrated $G_2$ structure on a $7$-dimensional manifold is 
$\mathrm{Ric}^\nabla$-flat if and only if its torsion $T$ is harmonic.
Since $H^3(S^7,\R)=0$, the assertion follows.
Finally, we show that the underlying $G_2$ structures can also not
be locally conformally parallel (type $\X_4$): in \cite[Example
3.1]{Agri&F06}, we showed that such a structure always satisfies
$12\, \delta \theta=6\|T\|^2-\Scal^\nabla$. Since $\nabla$ is flat, the
divergence theorem implies
\bdm
0\ = \ 2\int_{S^7}\delta \theta\,dS^7\ =\ \int_{S^7}\|T\|^2\,dS^7,
\edm
a contradiction to $T\neq 0$. To summarize: There exists a multitude of
$G_2$ structures $\omega\in\Lambda^3(S^7)$ that admit the flat
metric connection $\nabla$ as their characteristic connection; all
these $G_2$ structures are of general type 
$\X_1\oplus \X_3\oplus\X_4$.
%
    
\end{document}